\def\RR{\mathbb R}
\def\NN{\mathbb N}
\newcommand{\rd}{\mathbb{R}^{d}}
\begin{document}

\title*{On the Hausdorff dimension of graphs of prevalent continuous functions on compact sets}
\titlerunning{On the Hausdorff dimension of graphs}
\author{Fr\'ed\'eric Bayart and Yanick Heurteaux}
\institute{Fr\'ed\'eric Bayart \at Clermont Universit\'e, Universit\'e Blaise Pascal, Laboratoire de Math\'ematiques, BP 10448, F-63000 CLERMONT-FERRAND -
CNRS, UMR 6620, Laboratoire de Math\'ematiques, F-63177 AUBIERE, \email{Frederic.Bayart@math.univ-bpclermont.fr}
\and Yanick Heurteaux \at Clermont Universit\'e, Universit\'e Blaise Pascal, Laboratoire de Math\'ematiques, BP 10448, F-63000 CLERMONT-FERRAND -
CNRS, UMR 6620, Laboratoire de Math\'ematiques, F-63177 AUBIERE, \email{Yanick.Heurteaux@math.univ-bpclermont.fr}}
%
%
\maketitle

\abstract{Let $K$ be a compact set in $\rd$ with positive Hausdorff dimension. Using a Fractional Brownian Motion, we prove that in a prevalent set of continuous functions on $K$,   the Hausdorff dimension of the graph is equal to $\dim_{\mathcal H}(K)+1$. This is the largest possible value. This result generalizes a previous work due to J.M. Fraser and J.T. Hyde (\cite{FH}) which was exposed in the conference {\it Fractal and Related Fields~2}. The case of $\alpha$-H\"olderian functions is also discussed.}
\section{Introduction}
Let $d\ge 1$ and let $K$ be a compact subset in $\RR^d$. Denote by $\mathcal C(K)$ the set of continuous functions on $K$ with real values.
This is a Banach space when equipped with the supremum norm, $\|f\|_\infty=\sup_{x\in K} |f(x)|$. The graph of a function $f\in\mathcal C(K)$ is the set
$$\Gamma_f^K=\left\{ (x,f(x))\ ;\ x\in K\right\}\subset\RR^{d+1}.$$
It is often difficult to obtain the exact value of the Hausdorff dimension of the graph $\Gamma_f^K$ of a precised continuous function $f$. For example, a famous conjecture says that the Hausdorff dimension of the graph of the Weierstrass function
$$f(x)=\sum_{k=0}^{+\infty}2^{-k\alpha}\cos(2^kx),$$
where $0<\alpha<1$, satisfies
$$\dim_{\mathcal H}\left(\Gamma_f^{[0,2\pi]}\right)=2-\alpha.$$
This is the natural  expected value, but, to our knowledge, this conjecture is not yet solved. 

If we add some randomness, the problem becomes much easier and Hunt proved in \cite{Hunt} that the Hausdorff dimension of the graph of the random Weierstrass function 
$$f(x)=\sum_{k=0}^{+\infty}2^{-k\alpha}\cos(2^kx+\theta_k)$$
where $(\theta_k)_{k\ge 0}$ is a sequence of independent uniform random variables is almost surely equal to the expected value $2-\alpha$.

\medskip
In the same spirit we can hope to have a generic answer to the following question:
\begin{eqnarray*}
\textrm{``What is the Hausdorff dimension of the graph of a continuous function?''}
\end{eqnarray*}

Curiously, the answer to this question depends on the type of genericity we consider. If genericity is relative to the Baire category theorem, Mauldin and Williams proved at the end of the 80's the following result:
\begin{theorem}\label{THMBAIRE}{\em (\cite{MW})}
For quasi-all functions $f\in \mathcal C([0,1])$, we have
$$\dim_{\mathcal H} \left(\Gamma_f^{[0,1]}\right)=1.$$
\end{theorem}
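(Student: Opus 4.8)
The plan is to prove the two inequalities separately, noting that only the upper bound requires genericity. For \emph{every} $f\in\mathcal C([0,1])$ the projection $\pi:\Gamma_f^{[0,1]}\to[0,1]$, $\pi(x,f(x))=x$, is $1$-Lipschitz and onto; since Lipschitz maps do not increase Hausdorff dimension, $1=\dim_{\mathcal H}([0,1])\le\dim_{\mathcal H}(\Gamma_f^{[0,1]})$. It therefore suffices to show that $\{f\in\mathcal C([0,1]):\dim_{\mathcal H}(\Gamma_f^{[0,1]})\le 1\}$ is residual, and I would do this by exhibiting a dense $G_\delta$ set inside it.

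For real parameters $s>1$ and $\varepsilon>0$, set
$$V_{s,\varepsilon}=\bigl\{\,f\in\mathcal C([0,1]):\ \Gamma_f^{[0,1]}\text{ is covered by finitely many open sets }U_1,\dots,U_N\text{ with }{\textstyle\sum_{i=1}^N}(\operatorname{diam}U_i)^s<\varepsilon\,\bigr\}.$$
First I would check that $V_{s,\varepsilon}$ is open: if $f\in V_{s,\varepsilon}$ with witnessing cover $(U_i)_{i\le N}$, then, $\Gamma_f^{[0,1]}$ being compact and contained in the open set $\bigcup_i U_i$, the $\eta$-neighbourhood of $\Gamma_f^{[0,1]}$ is contained in $\bigcup_i U_i$ for some $\eta>0$; and $\|g-f\|_\infty<\eta$ forces $\Gamma_g^{[0,1]}$ into that neighbourhood, so the same cover witnesses $g\in V_{s,\varepsilon}$. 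Next I would check that $V_{s,\varepsilon}$ is dense: piecewise affine functions are dense in $\mathcal C([0,1])$, and the graph of a piecewise affine function is a finite union of line segments, hence has finite $\mathcal H^1$-measure and therefore $\mathcal H^s$-measure zero for every $s>1$; consequently such a graph admits, for each $\varepsilon>0$, a countable cover with $\sum(\operatorname{diam})^s<\varepsilon$, which a standard fattening-and-compactness argument lets us take to be finite and open, so that the piecewise affine function lies in $V_{s,\varepsilon}$.

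Finally I would put $\mathcal G=\bigcap_{n\ge1}\bigcap_{m\ge1}V_{1+1/n,\,1/m}$, which is a dense $G_\delta$ by the previous two points and Baire's theorem. If $f\in\mathcal G$ then for every $n$ the graph $\Gamma_f^{[0,1]}$ has, for each $m$, a cover with $\sum(\operatorname{diam})^{1+1/n}<1/m$; since any cover $\{U_i\}$ with $\sum(\operatorname{diam}U_i)^{s}<\varepsilon$ consists of sets of diameter $<\varepsilon^{1/s}$, this shows $\mathcal H^{1+1/n}(\Gamma_f^{[0,1]})=0$, hence $\dim_{\mathcal H}(\Gamma_f^{[0,1]})\le 1+1/n$ for all $n$, i.e.\ $\dim_{\mathcal H}(\Gamma_f^{[0,1]})\le1$. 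Together with the universal lower bound, this gives $\dim_{\mathcal H}(\Gamma_f^{[0,1]})=1$ for every $f$ in the dense $G_\delta$ set $\mathcal G$, hence for quasi-all $f\in\mathcal C([0,1])$.

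As for difficulty: the argument is essentially soft, and the one point that must be handled with a little care is the \emph{openness} of the sets $V_{s,\varepsilon}$ — this is why the covering condition is phrased with \emph{open} sets (equivalently, one could allow an arbitrarily small enlargement of the diameters), so that a uniformly small perturbation of $f$ cannot make the graph escape a fixed good cover. The density step, by contrast, reduces immediately to the trivial observation that polygonal graphs are $1$-rectifiable.
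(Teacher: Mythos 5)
Your argument is correct. The paper itself offers no proof of this statement---it is quoted from Mauldin and Williams \cite{MW} as background---so there is nothing internal to compare against; but your soft Baire-category proof is the standard and complete way to establish exactly what is asserted. The lower bound via the $1$-Lipschitz projection is right, the openness of $V_{s,\varepsilon}$ is correctly reduced to the positive distance between the compact graph and the closed complement of the covering union, and the density step via piecewise affine functions (whose graphs have finite $\mathcal H^1$-measure, hence null $\mathcal H^s$-measure for $s>1$) works; the only point worth spelling out is the passage from a countable cover to a finite open one, where sets of diameter zero must be enlarged to open sets with a summable extra contribution before invoking compactness---your ``fattening-and-compactness'' remark covers this. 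One contextual comment: what \cite{MW} actually proves is finer than the dimension statement, namely results about the $\mathcal H^1$-structure (and suitable gauge functions) of the graph of a typical continuous function; the bare equality $\dim_{\mathcal H}(\Gamma_f^{[0,1]})=1$ for quasi-all $f$ is the soft corollary that your argument recovers directly, which is entirely adequate for the role the theorem plays in this paper, namely as a foil motivating the switch from Baire category to prevalence.
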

This statement on the Hausdorff dimension of the graph is very  surprising because it seems to say that a generic continuous function is quite regular. Indeed it is convenient to think that there is a deep correlation between strong irregularity properties of a function  and large values of the Hausdorff dimension of its graph.

\medskip
This curious result seems to indicate that genericity in the sense of the Baire category theorem is not ``the good notion of genericity'' for this question. In fact, when genericity is related to the notion of prevalence (see Section \ref{SECPRE} for a precise definition), Fraser and Hyde recently obtained the following result.
\begin{theorem}\label{THMFH}{\em (\cite{FH})}
Let $d\in\NN^*$. The set
$$\left\{ f\in\mathcal C([0,1]^d)\ ;\ \dim_{\mathcal H}\left(\Gamma_f^{[0,1]^d}\right)=d+1\right\}$$
is a prevalent subset of $\mathcal C([0,1]^d)$.
\end{theorem}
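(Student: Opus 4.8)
The plan is to combine the definition of prevalence with a Frostman energy estimate for a well-chosen random perturbation. First note that the upper bound is for free: for every $f\in\mathcal C([0,1]^d)$ an elementary covering argument gives $\dim_{\mathcal H}\big(\Gamma_f^{[0,1]^d}\big)\le\overline{\dim}_B\big(\Gamma_f^{[0,1]^d}\big)\le d+1$ (divide $[0,1]^d$ into $N^d$ cubes of side $1/N$; over each one the oscillation of $f$ is at most $\omega_f(\sqrt d/N)<1$ for $N$ large, so the graph is covered by at most $N^{d+1}\omega_f(\sqrt d/N)+2N^d\le 3N^{d+1}$ cubes of side $1/N$, whence the box dimension bound). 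Hence the complement of the set in the statement is exactly $\mathcal S=\big\{f:\dim_{\mathcal H}(\Gamma_f^{[0,1]^d})<d+1\big\}$, which is Borel since $f\mapsto\dim_{\mathcal H}(\Gamma_f^{[0,1]^d})$ is Borel measurable. By the definition of prevalence it therefore suffices to exhibit a compactly supported Borel probability measure $\mu$ on $\mathcal C([0,1]^d)$ that is transverse to $\mathcal S$, i.e. with $\mu\big(\{g:\dim_{\mathcal H}(\Gamma_{f+g}^{[0,1]^d})<d+1\}\big)=0$ for \emph{every} $f\in\mathcal C([0,1]^d)$.

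For the probe I take $\mu$ to be (a conditioning of) the law of a random continuous function $X$ on $[0,1]^d$ which is continuous but rougher than every H\"older class. A convenient model is a superposition of independent isotropic fractional Brownian fields, $X=\sum_{n\ge1}2^{-n}B^{(1/n)}$, which converges uniformly almost surely (since $\sum_n2^{-n}\,\mathbb{E}\|B^{(1/n)}\|_{\infty}<\infty$) and hence is a.s. continuous. Its key feature is \emph{anti-concentration at every scale}: for $x\ne y$ the increment $X(x)-X(y)$ is centred Gaussian with standard deviation $\sigma(|x-y|)$, where $\sigma(t)^2=\sum_n4^{-n}t^{2/n}\ge\exp\!\big(-c\sqrt{\log(1/t)}\big)$ for small $t$, so that $\sigma(t)\to0$ but stays larger than $t^{\alpha}$ near $0$ for \emph{every} $\alpha>0$. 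Since $\operatorname{law}(X)$ is Gaussian, hence not compactly supported, I replace it by its conditioning on a set $C=\{g:|g(x)-g(y)|\le\phi(|x-y|),\ \|g\|_\infty\le M\}$ with $\phi$ a fixed modulus of continuity and $M$ large enough that $\mathbb{P}(X\in C)>0$; by the Ascoli--Arzel\`a theorem $C$ is compact, and conditioning only inflates the constants in the estimate below.

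The heart of the matter is the energy estimate. Fix $f\in\mathcal C([0,1]^d)$, set $Y=f+X$, and let $\nu$ be the push-forward of Lebesgue measure on $[0,1]^d$ under $x\mapsto(x,Y(x))\in\Gamma_Y^{[0,1]^d}$, so that its $s$-energy is $I_s(\nu)=\iint(|x-y|^2+(Y(x)-Y(y))^2)^{-s/2}\,dx\,dy$. For $1<s<d+1$ one bounds $I_s$ pair by pair, using that $Y(x)-Y(y)=(f(x)-f(y))+(X(x)-X(y))$ has density at most $C\,\sigma(|x-y|)^{-1}$ \emph{uniformly in the deterministic shift} $f(x)-f(y)$ --- this is precisely where randomness beats the deterministic situation, the offending $f$ merely translating a spread-out law:
\[
\mathbb{E}\Big[\big(|x-y|^2+(Y(x)-Y(y))^2\big)^{-s/2}\Big]\ \le\ \frac{C}{\sigma(|x-y|)}\int_{\mathbb{R}}\frac{du}{(|x-y|^2+u^2)^{s/2}}\ \le\ \frac{C_s\,|x-y|^{1-s}}{\sigma(|x-y|)}.
\]
Integrating in $x,y$ and passing to polar coordinates,
\[
\mathbb{E}\big[I_s(\nu)\big]\ \le\ C_s'\int_0^{\sqrt d}\frac{t^{\,d-s}}{\sigma(t)}\,dt\ <\ \infty,
\]
because $d-s>-1$ while $\sigma(t)^{-1}\le t^{-\alpha}$ near $0$ for arbitrarily small $\alpha>0$, so one picks $\alpha<d-s+1$ and keeps convergence. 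Hence, for each fixed $f$, almost surely $I_s(\nu)<\infty$ for all rational $s\in(1,d+1)$, so $\dim_{\mathcal H}(\Gamma_{f+X}^{[0,1]^d})\ge d+1$ by Frostman's lemma; combined with the upper bound it equals $d+1$. Thus $\mu$ is transverse to $\mathcal S$, $\mathcal S$ is shy, and the set of the statement is prevalent.

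I expect the main obstacle to be the construction of the probe: one needs a \emph{single} random continuous function $X$ whose increment scale $\sigma(|x-y|)$ decays slower than every power of $|x-y|$ for \emph{all} pairs (a single fractional Brownian field of Hurst index $H$ would only yield $\dim_{\mathcal H}\ge d+1-H$, and averaging over $H$ does not help, since a.e. realization would then still lie in a fixed H\"older class), and for which the conditioning that makes $\mu$ compactly supported destroys none of these properties; one then has to verify the $f$-uniform density bound and the convergence of $\int_0^1 t^{d-s}\sigma(t)^{-1}\,dt$ with this particular $\sigma$. A minor but genuine point to record is the Borel measurability of $\mathcal S$, needed for the prevalence definition to apply verbatim.
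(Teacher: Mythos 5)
Your proposal is correct in substance, and its computational core is the same as the paper's: the Gaussian increment $Y(x)-Y(y)$ has density bounded by $C/\sigma(\vert x-y\vert)$ \emph{uniformly in the deterministic translate} $f(x)-f(y)$, which yields $\mathbb E\bigl[(\vert x-y\vert^2+(Y(x)-Y(y))^2)^{-s/2}\bigr]\le C_s\vert x-y\vert^{1-s}/\sigma(\vert x-y\vert)$ for $s>1$; this is exactly the paper's Lemma 1 (with $\sigma(t)\asymp t^{\alpha}$ there), and the passage to finite $s$-energy of the lifted measure and then to the dimension bound via the potential-theoretic method is identical. Where you genuinely diverge is in the architecture of the probe. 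The paper uses, for each Hurst parameter $\alpha$, the process $W(x)=W^1(x_1)+\cdots+W^d(x_d)$ built from one-dimensional Fractional Brownian Motions, obtains prevalence of $\{f:\dim_{\mathcal H}(\Gamma_f)\ge d+1-\alpha\}$ for each fixed $\alpha$, and concludes by intersecting countably many prevalent sets along $\alpha_n\downarrow 0$ --- a step that costs nothing since countable intersections of prevalent sets are prevalent. You instead build a \emph{single} transverse measure from the multiscale superposition $X=\sum_n 2^{-n}B^{(1/n)}$, whose increment scale $\sigma(t)\ge\exp(-c\sqrt{\log(1/t)})^{1/2}$ beats every power $t^{\alpha}$, so one energy estimate covers all $s<d+1$ simultaneously. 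This buys a one-shot transverse measure at the price of extra verification: you must discard (or start after) the degenerate $n=1$ term since a Hurst index equal to $1$ gives a degenerate field, justify the almost sure uniform convergence of the series (your bound $\sum_n 2^{-n}\mathbb E\Vert B^{(1/n)}\Vert_\infty<\infty$ does work, e.g.\ via Dudley's entropy bound giving $\mathbb E\Vert B^{(1/n)}\Vert_\infty\lesssim\sqrt{n}$, plus It\^o--Nisio), and check that conditioning on the compact set $C$ only multiplies the expected energy by $1/\mathbb P(X\in C)$. The paper's route avoids all of this by keeping each probe a plain FBM; on the other hand your single-probe construction is the kind of device one would need if the countable-intersection property were unavailable. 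Both treatments leave the Borel measurability of the exceptional set at the same level of informality; as you note, it is a standard point (and the definition of Haar-null only requires a Borel Haar-null superset).
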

This result says that the Hausdorff dimension of the graph of a generic continuous function
 is as large as possible and is much more in accordance with the idea that a generic continuous function is strongly irrregular. 

\medskip
The main tool in the proof of Theorem \ref{THMFH} is the construction of a fat Cantor set in the interval $[0,1]$ and a stochastic process on $[0,1]$ whose graph  has almost surely  Hausdorff dimension 2.
 This construction is difficult to generalize in a compact set $K\not=[0,1]$. 
Nevertheless, there are in the litterature stochastic processes whose almost sure Hausdorff dimension of their graph is well-known. The most famous example is the Fractional Brownian Motion. 
Using such a process, we are able to prove the following generalisation of Theorem \ref{THMFH}.
\begin{theorem}\label{THMMAIN}
 Let $d\ge 1$ and let $K\subset \rd$ be a compact set such that $\dim_{\mathcal H}(K)>0$. The set
$$\left\{ f\in\mathcal C(K)\ ;\ \dim_{\mathcal H}\left(\Gamma_f^K\right)=\dim_{\mathcal H}(K)+1\right\}$$
is a prevalent subset of $\mathcal C(K)$.
\end{theorem}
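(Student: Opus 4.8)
The plan is to prove the theorem by working with the complement and showing it is \emph{shy}. First I would record that the upper bound is automatic: for any $f\in\mathcal C(K)$ the graph $\Gamma_f^K$ lies inside $K\times[-\|f\|_\infty,\|f\|_\infty]$, and since the packing dimension of a bounded interval equals $1$, the product estimate $\dim_{\mathcal H}(A\times B)\le\dim_{\mathcal H}(A)+\dim_{\mathcal P}(B)$ gives $\dim_{\mathcal H}(\Gamma_f^K)\le\dim_{\mathcal H}(K)+1$. So the whole content is the reverse inequality, i.e. that
\[
E=\bigl\{f\in\mathcal C(K)\ :\ \dim_{\mathcal H}(\Gamma_f^K)<\dim_{\mathcal H}(K)+1\bigr\}
\]
is shy. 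Writing $E=\bigcup_{n\ge 1}E_n$ with $E_n=\{f:\dim_{\mathcal H}(\Gamma_f^K)\le\dim_{\mathcal H}(K)+1-\frac1n\}$ and using that the shy sets form a $\sigma$-ideal, it suffices to prove that each $E_n$ is shy.

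To probe $E_n$ I would use a fractional Brownian field of small Hurst exponent. Fix $n$ and choose $H$ with $0<H<\min(\frac1n,\dim_{\mathcal H}(K))$, and let $B^H$ be a continuous version of the centred Gaussian field on $\rd$ with $\EE[(B^H(x)-B^H(y))^2]=|x-y|^{2H}$ (which exists and has a.s. continuous paths, by Kolmogorov's criterion). Its restriction to $K$ is a random element of $\mathcal C(K)$ whose law $\mu$ is a Borel probability measure; being a Borel measure on a Polish space it is tight, and a tight transverse measure can be restricted to a compact set of positive measure and normalized, yielding a compactly supported transverse measure. Hence it is enough to show that $\mu$ is transverse to $E_n$, i.e. that for every fixed $f\in\mathcal C(K)$ one has $\dim_{\mathcal H}(\Gamma_{f+B^H}^K)>\dim_{\mathcal H}(K)+1-\frac1n$ almost surely; I would in fact prove the cleaner statement that almost surely $\dim_{\mathcal H}(\Gamma_{f+B^H}^K)\ge\dim_{\mathcal H}(K)+1-H$.

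For this almost sure lower bound I would use the energy method. Fix $f$ and fix $s$ with $H<s<\dim_{\mathcal H}(K)$, and let $m$ be a Frostman probability measure supported on $K$ with $m(B(x,r))\le Cr^s$ for all $x,r$. Put $g=f+B^H$ and let $\tilde m$ be the image of $m$ under $x\mapsto(x,g(x))$, a measure carried by $\Gamma_g^K$. For $1<t<s+1-H$, writing $a=|x-y|$, the increment $g(x)-g(y)$ is Gaussian with mean $f(x)-f(y)$ and variance $a^{2H}$; bounding its density by $(2\pi a^{2H})^{-1/2}$ and using $\int_{\RR}(a^2+u^2)^{-t/2}\,du=c_t\,a^{1-t}$ (this is where $t>1$ is needed) gives $\EE\bigl[(a^2+(g(x)-g(y))^2)^{-t/2}\bigr]\le C_t\,|x-y|^{1-t-H}$. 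Integrating in $m\times m$, and using $t+H-1<s$ together with the Frostman condition, shows $\EE[I_t(\tilde m)]<\infty$, hence $I_t(\tilde m)<\infty$ a.s., hence $\dim_{\mathcal H}(\Gamma_g^K)\ge t$ a.s. Letting $t\uparrow s+1-H$ and then $s\uparrow\dim_{\mathcal H}(K)$ along countable sequences yields $\dim_{\mathcal H}(\Gamma_g^K)\ge\dim_{\mathcal H}(K)+1-H$ almost surely. Thus $\mu$ is transverse to $E_n$, each $E_n$ is shy, and therefore $E$ is shy, proving the theorem.

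The main obstacle is conceptual rather than computational: a single fractional Brownian field of index $H>0$ only produces graphs of dimension $\dim_{\mathcal H}(K)+1-H$, strictly below the target, so no one transverse measure finishes the argument; the decomposition $E=\bigcup_n E_n$ together with the $\sigma$-ideal property of shyness is precisely what lets $H\to 0$. The remaining points needing care are the measurability of the sets $E_n$ (one should check that $f\mapsto\dim_{\mathcal H}(\Gamma_f^K)$ is Borel, or argue within the class of universally measurable sets), and the truncation step turning the Gaussian law $\mu$ into a genuinely compactly supported transverse measure.
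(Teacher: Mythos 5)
Your proposal is correct and follows essentially the same route as the paper: perturb by a fractional Brownian process of small Hurst index $H$, obtain the almost sure lower bound $\dim_{\mathcal H}(\Gamma_{f+B^H}^K)\ge\dim_{\mathcal H}(K)+1-H$ via the energy method with the Gaussian density bound $\EE[(a^2+(g(x)-g(y))^2)^{-t/2}]\le C_t a^{1-t-H}$ for $t>1$, and then let $H\to 0$ along a countable sequence using the $\sigma$-ideal property of shy sets. The only (immaterial) differences are that you use the isotropic fractional Brownian field and a Frostman measure where the paper uses a sum of coordinate-wise fractional Brownian motions and a measure of finite $\delta$-energy.
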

In this paper, we have decided to focus to the notion of Hausdorff dimension of graphs. Nevertheless, we can mention that there are also many papers that deal with the generic value of the dimension of graphs when the notion of dimension is for example the lower box dimension (see \cite{FF,GJ,HL,Shaw}) or the packing dimension (see \cite{HP,McClure}).

\medskip
The paper is devoted to the proof of Theorem \ref{THMMAIN} and is organised as follows. In Section \ref{SECPRE} we recall the basic facts on prevalence. In particular we explain how to use a stochastic process in order to prove prevalence in functional vector spaces. In Section \ref{SECW}, we prove an auxiliary result on Fractional Brownian Motion which will be the key of the main theorem. We finish the proof of Theorem \ref{THMMAIN} in Section \ref{SECMAIN}. Finally, in a last section, we deal with the case of $\alpha$-H\"olderian functions.

\section{Prevalence}\label{SECPRE}
Prevalence is a notion of genericity who generalizes in infinite dimensional vector spaces the notion of ``almost everywhere with respect to Lebesgue measure''. This notion has been 
introduced by J. Christensen in \cite{Chr72} and has been
widely studied since then. In fractal and multifractal analysis, some properties which are true on a dense $G_\delta$-set are also prevalent 
(see for instance \cite{FJK}, \cite{FJ06} or \cite{BH11b}), whereas some are not (see for instance 
\cite{FJK} or \cite{Ol10}). 
\begin{definition}
Let $E$ be a complete metric vector space. A Borel set $A\subset E$ is called 
\emph{Haar-null} if there exists a compactly supported 
probability measure $\mu$ such that, for any $x\in E$, $\mu(x+A)=0$. If this property 
holds, the measure $\mu$
is said to be \emph{transverse} to $A$.\\
A subset of $E$ is called \emph{Haar-null} if it is contained in a Haar-null Borel set. 
The complement of a Haar-null set
is called a \emph{prevalent} set.
\end{definition}
The following results enumerate important properties of prevalence and show that this 
notion supplies a natural generalization
of "almost every" in infinite-dimensional spaces:
\begin{itemize}
\item If $A$ is Haar-null, then $x+A$ is Haar-null for every $x\in E$.
\item If $\dim(E)<+\infty$, $A$ is Haar-null if and only if it is negligible
  with respect to the  Lebesgue measure.
\item Prevalent sets are dense.
\item The intersection of a countable collection of prevalent sets is prevalent.
\item If $\dim(E)=+\infty$, compacts subsets of $E$ are Haar-null.
\end{itemize}
In the context of a functional vector space $E$, a usual way to prove that a set $A\subset E$ is prevalent is to use a stochastic process. More precisely, suppose that $W$ is a stochastic process defined on a probability space $(\Omega,\mathcal F, \mathbb P)$ with values in $E$ and satisfies. 
$$\forall f\in E,\quad f+W\in A\quad\text{almost surely}.$$
Replacing $f$ by $-f$, we get that the law $\mu$ of the stochastic process $W$ is such that
$$\forall f\in E,\quad \mu(f+A)=1.$$
In general, the measure $\mu$ is not compactly supported. Nevertheless, if we suppose that the vector space $E$ is also a Polish space (that is if we add the hypothesis that $E$ is separable), then we can find a compact set $Q\subset E$ such that $\mu(Q)>0$. It follows that the compactly supported probability measure $\nu=(\mu(Q))^{-1}\mu_{|Q}$ is transverse to $E\setminus A$.

\section{On the graph of a perturbed Fractional Brownian Motion}\label{SECW}
In this section, we prove an auxilliary result which will be the key of the proof of Theorem \ref{THMMAIN}. For the definition and the main properties of the Fractional Brownian Motion, 
we refer to \cite[Chapter 16]{Falc}.
\begin{theorem}\label{THMFBM}
Let $K$ be a compact set in $\rd$ such that 
$\dim_{\mathcal H}(K)>0$ and $\alpha\in (0,1)$. Define the stochastic process in $\rd$ 
\begin{eqnarray}\label{EQNPROCESS}
W(x)=W^1(x_1)+\cdots+ W^d(x_d)
\end{eqnarray}
where $W^1,\cdots,W^d$ are independent Fractional Brownian Motions starting from 0 with Hurst parameter equal to $\alpha$. Then, for any function $f\in\mathcal C(K)$
$$\dim_{\mathcal H}\left(\Gamma_{f+W}^K\right)\ge\min\left(\frac{\dim_{\mathcal H}(K)}{\alpha}\,,\, \dim_{\mathcal H}(K)+1-\alpha\right)\quad\textrm{almost surely}.$$
 \end{theorem}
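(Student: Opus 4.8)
The plan is to bound the Hausdorff dimension of $\Gamma_{f+W}^K$ from below by the standard potential-theoretic (energy) method: it suffices to exhibit, for every $s$ strictly less than the target value $\min(\dim_{\mathcal H}(K)/\alpha,\ \dim_{\mathcal H}(K)+1-\alpha)$, a finite Borel measure carried by $\Gamma_{f+W}^K$ with finite $s$-energy, with positive probability (indeed almost surely). To do this, fix $t<\dim_{\mathcal H}(K)$ close to $\dim_{\mathcal H}(K)$; by Frostman's lemma there is a probability measure $\sigma$ supported on $K$ with $\sigma(B(x,r))\lesssim r^t$ for all $x,r$. Push $\sigma$ forward onto the graph via $x\mapsto (x, f(x)+W(x))$, call the resulting random measure $\mu_W$. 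Then one must estimate $\mathbb E\big(\iint \|(x,f(x)+W(x))-(y,f(y)+W(y))\|^{-s}\, d\sigma(x)\, d\sigma(y)\big)$ and show it is finite for an appropriate range of $s$; Fubini then gives a.s. finiteness of the energy for $\mu_W$, hence $\dim_{\mathcal H}(\Gamma_{f+W}^K)\ge s$ a.s., and letting $s$ and $t$ increase to their suprema (along a countable sequence) closes the argument.

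The core computation is the inner expectation $\mathbb E\big(\big(|x-y|^2+|f(x)+W(x)-f(y)-W(y)|^2\big)^{-s/2}\big)$ for fixed $x\neq y$ in $K$. Here $W(x)-W(y)=\sum_{i=1}^d (W^i(x_i)-W^i(y_i))$ is a centered Gaussian variable whose variance is $\sigma_{x,y}^2 = \sum_{i=1}^d |x_i-y_i|^{2\alpha}$, which is comparable to $|x-y|^{2\alpha}$ (up to dimensional constants). Writing $Z$ for this Gaussian variable and $m = f(x)-f(y)$, we need $\mathbb E\big((|x-y|^2+(m+Z)^2)^{-s/2}\big)$. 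The key elementary lemma is that for a Gaussian $Z$ of standard deviation $\rho$, any real $m$, and any $a>0$, one has $\mathbb E\big((a^2+(m+Z)^2)^{-s/2}\big)\lesssim a^{-s}$ if $0<s<1$, and more generally $\lesssim a^{1-s}\rho^{-1}$ when $s>1$ (the worst case being the density of $m+Z$ near the origin, which is at most $(\rho\sqrt{2\pi})^{-1}$). Applying this with $a=|x-y|$ and $\rho\asymp|x-y|^\alpha$ yields a bound of the form $C|x-y|^{-s}$ when $s<1$ and $C|x-y|^{1-s-\alpha}$ when $1<s$; uniformly one gets $C|x-y|^{-\beta}$ with $\beta = \max(s,\ s-1+\alpha)$, equivalently the bound holds with exponent $s$ for $s\le 1$ and exponent $s-1+\alpha$ for $s\ge 1$.

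It remains to integrate this against $d\sigma(x)\,d\sigma(y)$. Using the Frostman bound $\sigma(B(x,r))\lesssim r^t$, the integral $\iint |x-y|^{-\beta}\, d\sigma(x)\,d\sigma(y)$ is finite as soon as $\beta < t$. So: if $s\le 1$ we need $s<t$; if $s\ge 1$ we need $s-1+\alpha<t$, i.e. $s<t+1-\alpha$. Taking the supremum over admissible $s$ gives the a.s. lower bound $\min(t/\alpha \text{ is not quite it—let me recheck})$; more precisely, the two regimes combine to give $\dim_{\mathcal H}(\Gamma_{f+W}^K)\ge \min(1, t)$ from the first regime is not optimal, so one should instead keep the sharper exponent: in the regime $s<1$ the better elementary bound uses $\mathbb E((a^2+(m+Z)^2)^{-s/2})\lesssim a^{-s}$ only when $a\gtrsim\rho$, and when $a\lesssim\rho$ one gains, giving in all cases the bound $\asymp \min(|x-y|^{-s}, |x-y|^{-s\alpha})$ up to lower-order terms, whence the threshold $s\alpha<t$, i.e. $s<t/\alpha$, in the small-scale-dominated regime; combined with $s<t+1-\alpha$ from the large-$s$ regime one obtains exactly $\min(t/\alpha,\ t+1-\alpha)$, and letting $t\uparrow\dim_{\mathcal H}(K)$ finishes. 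The main obstacle is carrying out this two-regime Gaussian estimate cleanly so that the exponents match the stated minimum on the nose; everything else (Frostman, push-forward, Fubini, the energy criterion) is routine.
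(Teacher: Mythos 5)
Your proposal is correct and follows essentially the same route as the paper: the potential-theoretic energy criterion applied to the push-forward onto the graph of a suitable measure on $K$, combined with the same two-regime Gaussian estimate (for $s<1$ the exponent $\alpha s$ coming from the scale $\rho\asymp\Vert x-y\Vert^{\alpha}$ of the increment $W(x)-W(y)$, and for $s>1$ the exponent $s-1+\alpha$ coming from the density bound $(\rho\sqrt{2\pi})^{-1}$). The only cosmetic difference is that you invoke Frostman's lemma to get a measure with $\sigma(B(x,r))\le Cr^{t}$ whereas the paper directly takes a probability measure of finite $\delta$-energy on $K$; after your mid-paragraph self-correction your exponents agree exactly with the paper's key lemma, so the argument closes in the same way.
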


Let us remark that the conclusion of Theorem \ref{THMFBM} is sharp. More precisely, suppose that $f=0$ and let $\varepsilon>0$. It is well known that the Fractional Brownian Motion is almost-surely uniformly $(\alpha-\varepsilon)$-H\"olderian. It follows that the stochastic process $W$ is also 
uniformly $(\alpha-\varepsilon)$-H\"olderian on $K$. It is then straightforward that the graph $\Gamma_W^K$ satisfies
$$\dim_{\mathcal H}\left(\Gamma_W^K\right)\le \dim_{\mathcal H}(K)+1-(\alpha-\varepsilon)\quad\mbox{a.s.}.$$
On the other hand, the application
$$\Phi\ :\ x\in K\longmapsto (x,W(x))\in{\mathbb R}^{d+1}$$
is almost-surely $(\alpha-\varepsilon)$-H\"olderian. It follows that
$$\dim_{\mathcal H}\left(\Gamma_W^K\right)\le \frac{\dim_{\mathcal H}(K)}{\alpha-\varepsilon}\quad\mbox{a.s.}.$$
\smallskip

The proof of Theorem \ref{THMFBM} is based on the following lemma.

\begin{lemma}\label{LEMESP}
Let $s>0$, $\alpha\in (0,1)$ and $W$ be the process defined as in {\em (\ref{EQNPROCESS})}. There exists a constant $C:=C(s)>0$ such that for any $\lambda\in\RR$, for any $x,y\in\rd$, 
$$\mathbb E\left[\frac1{(\Vert x-y\Vert^2+(\lambda+W(x)-W(y))^2)^{s/2}}\right]\le
C\left\{
\begin{array}{ll}
 \displaystyle \Vert x-y\Vert^{1-s-\alpha}&\textrm{ provided }s>1\\
\displaystyle \Vert x-y\Vert^{-\alpha s}&\textrm{ provided }s<1.
\end{array}
\right.$$
\end{lemma}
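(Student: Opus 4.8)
The plan is to reduce everything to a one-dimensional computation. Fix $x,y\in\rd$ and set $r=\|x-y\|$. The key observation is that the random variable $Z:=W(x)-W(y)$ is a centered Gaussian: indeed, writing $W(x)-W(y)=\sum_{i=1}^d (W^i(x_i)-W^i(y_i))$ as a sum of independent centered Gaussians, $Z$ is Gaussian with variance
$$\sigma^2=\mathbb E[Z^2]=\sum_{i=1}^d |x_i-y_i|^{2\alpha},$$
using the defining property $\mathbb E[(W^i(s)-W^i(t))^2]=|s-t|^{2\alpha}$ of fractional Brownian motion with Hurst parameter $\alpha$. Since the function $t\mapsto t^{\alpha}$ is subadditive on $[0,\infty)$ and each $|x_i-y_i|\le r$, while $\max_i|x_i-y_i|\ge r/\sqrt d$, there are constants $c_1,c_2>0$ depending only on $d$ and $\alpha$ with
$$c_1 r^{\alpha}\le \sigma\le c_2 r^{\alpha}.$$

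Next I would write the expectation as a Gaussian integral. Conditioning on $Z$, or simply using that $Z=\sigma g$ with $g$ standard normal, we get
$$\mathbb E\left[\frac1{(r^2+(\lambda+Z)^2)^{s/2}}\right]
=\frac1{\sqrt{2\pi}\,\sigma}\int_{\RR}\frac{e^{-t^2/(2\sigma^2)}}{(r^2+(\lambda+t)^2)^{s/2}}\,dt
\le \frac1{\sqrt{2\pi}\,\sigma}\int_{\RR}\frac{dt}{(r^2+(\lambda+t)^2)^{s/2}}.$$
After the translation $u=\lambda+t$ the integral no longer depends on $\lambda$, which is exactly why the bound is uniform in $\lambda$; this is the step that makes the whole lemma work and is essentially free. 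The substitution $u=rv$ gives
$$\int_{\RR}\frac{du}{(r^2+u^2)^{s/2}}=r^{1-s}\int_{\RR}\frac{dv}{(1+v^2)^{s/2}},$$
and the last integral converges precisely when $s>1$, producing the bound $C'\,\sigma^{-1}r^{1-s}\le C\,r^{1-s-\alpha}$ in view of $\sigma\ge c_1 r^{\alpha}$. This settles the case $s>1$.

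For $s<1$ the integral $\int_{\RR}(1+v^2)^{-s/2}\,dv$ diverges, so the crude bound $r^2+(\lambda+t)^2\ge r^2$ is wasteful and must be replaced. Instead I would split the inner integral at the scale $\sigma$: on $\{|t|\le \sigma\}$ bound the exponential by $1$ and keep $r^2+(\lambda+t)^2\ge (\lambda+t)^2$, while on $\{|t|>\sigma\}$ bound $r^2+(\lambda+t)^2\ge r^2$ but now use the Gaussian decay $\frac1{\sqrt{2\pi}\sigma}\int_{|t|>\sigma}e^{-t^2/2\sigma^2}\,dt\le 1$. More carefully, dropping the $r^2$ entirely,
$$\mathbb E\left[\frac1{(r^2+(\lambda+Z)^2)^{s/2}}\right]\le \mathbb E\left[\frac1{|\lambda+Z|^{s}}\right]
=\frac1{\sqrt{2\pi}\,\sigma}\int_{\RR}\frac{e^{-t^2/2\sigma^2}}{|\lambda+t|^{s}}\,dt,$$
and since $Z$ has a density bounded by $(\sqrt{2\pi}\,\sigma)^{-1}$, translating and scaling by $\sigma$ yields
$$\mathbb E\left[\frac1{|\lambda+Z|^{s}}\right]\le \frac1{\sqrt{2\pi}\,\sigma}\int_{\RR}\frac{dt}{|\lambda+t|^{s}}\wedge\text{(Gaussian tail)}
\le \frac{C''}{\sigma^{s}}\le C\, r^{-\alpha s},$$
where the middle estimate is the standard fact that $\mathbb E[|N|^{-s}]<\infty$ for a nondegenerate Gaussian $N$ when $s<1$, applied after rescaling, together with $\sigma\le c_2 r^{\alpha}$. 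The only genuine point of care — the main (mild) obstacle — is making the $s<1$ estimate uniform in $\lambda$: one checks that $\sup_{\lambda}\int_{\RR}e^{-t^2/2}|\lambda+\sqrt2\,t|^{-s}\,dt<\infty$, which follows by splitting into $|\lambda+\sqrt2 t|\le1$ and $|\lambda+\sqrt2 t|>1$ and using $s<1$ on the first piece, the Gaussian decay on the second. Collecting the two cases gives the claimed constant $C=C(s)$.
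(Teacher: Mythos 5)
Your proposal is correct and follows essentially the same route as the paper: compute the variance $\sigma^2=\sum_i|x_i-y_i|^{2\alpha}$ of the Gaussian increment and show $\sigma\asymp\|x-y\|^{\alpha}$, then for $s>1$ drop the Gaussian density and scale the integral $\int(r^2+u^2)^{-s/2}\,du=r^{1-s}\int(1+v^2)^{-s/2}\,dv$, and for $s<1$ drop the $r^2$ term and bound $\mathbb E\bigl[|\lambda+Z|^{-s}\bigr]\le C\sigma^{-s}$ uniformly in $\lambda$ by splitting the integral near and away from the singularity. The only cosmetic difference is that the paper gets the two-sided bound on $\sigma$ via H\"older's inequality rather than subadditivity of $t\mapsto t^{\alpha}$; the estimates are otherwise identical.
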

\begin{proof}
 Observe that $W(x)-W(y)$ is a centered gaussian variable with variance  $$\sigma^2=h_1^{2\alpha}+\cdots+h_d^{2\alpha}$$
where $h=(h_1,\cdots,h_d)=x-y$. H\"older's inequality yields 
$$\Vert h\Vert^{2\alpha}\le\sigma^2\le d^{1-\alpha}\Vert h\Vert^{2\alpha}.$$
Now,
$$\mathbb E\left[\frac1{(\Vert x-y\Vert^2+(\lambda+W(x)-W(y))^2)^{s/2}}\right]=\int\frac{e^{-u^2/(2\sigma^2)}}{(\Vert h\Vert^2+(\lambda+u)^2)^{s/2}}\frac{du}{\sigma\sqrt{2\pi}}.$$
Suppose that $s>1$. We get
\begin{eqnarray*}
\mathbb E\left[\frac1{(\Vert x-y\Vert^2+(\lambda+W(x)-W(y))^2)^{s/2}}\right]&\le&\int\frac{du}{(\Vert h\Vert^2+(\lambda+u)^2)^{s/2}\sigma\sqrt{2\pi}}\\
&=&\int\frac{\Vert h\Vert\,dv}{(\Vert h\Vert^2+(\Vert h\Vert v)^2)^{s/2}\sigma\sqrt{2\pi}}\\
&\le&\Vert h\Vert^{1-s-\alpha}\frac1{\sqrt{2\pi}}\int\frac{dv}{(1+v^2)^{s/2}}\\
&:=& C\,\|x-y\|^{1-s-\alpha}.
\end{eqnarray*}
In the case where $0<s<1$, we write
\begin{eqnarray*}
\mathbb E\left[\frac1{(\Vert x-y\Vert^2+(\lambda+W(x)-W(y))^2)^{s/2}}\right]&\le&\int\frac{e^{-v^2/2}}{(\lambda+\sigma v)^{s}}\frac{dv}{\sqrt{2\pi}}\\
&\le&\Vert h\Vert^{-\alpha s}\int\frac{e^{-v^2/2}}{(\gamma+v)^s}\frac{dv}{\sqrt{2\pi}}
\end{eqnarray*}
where $\gamma=\lambda\sigma^{-1}$.
On the other hand, 
\begin{eqnarray*}
\int\frac{e^{-v^2/2}\,dv}{(\gamma+v)^s}=\int\frac{e^{-(v-\gamma)^2/2}\,dv}{v^s}&\le&\int_{-1}^1\frac{dv}{v^s}+\int_{\RR\setminus[-1,1]}\frac{e^{-(v-\gamma)^2/2}\,dv}{v^s}\\
&\le& \int_{-1}^1\frac{dv}{v^s}+\int_\RR e^{-x^2/2}\,dx
\end{eqnarray*}
which is a constant $C$ independent of $\gamma$ and $\alpha$.
\end{proof}
\smallskip

We are now able to finish the proof of Theorem \ref{THMFBM}. We use the potential theoretic approach (for more details on the potential theoretic approach of the calculus of the Hausdorff dimension,
 we can refer to \cite[Chapter 4]{Falc}). Suppose first that $\dim_{\mathcal H}(K)>\alpha$ and let $\delta$ be a real number such that 
$$\alpha<\delta<\dim_{\mathcal H}(K).$$ 
There exists a probability measure $m$ on $K$ whose $\delta$-energy $I_\delta(m)$, defined by
 $$I_\delta(m)=\int\!\!\int_{K\times K}\frac{dm(x)\,dm(y)}{\Vert x-y\Vert^\delta}$$
is finite. Conversely, to prove that the Hausdorff dimension of the graph  $\Gamma_{f+W}^K$ is at least $\dim_{\mathcal H}(K)+1-\alpha$, it suffices to find, for any $s<\dim_{\mathcal H}(K)+1-\alpha$,
a measure $\mu$ on $\Gamma_{f+W}^K$ with finite $s$-energy. 

Let $(\Omega,\mathcal F, \mathbb P)$ be the probability space where are defined the Fractional Brownian Motions $W^1,\cdots,W^d$. For any $\omega\in\Omega$, define  $m_\omega$ as the image of the measure $m$ on the graph $\Gamma_{f+W_\omega}^K$ via the natural projection $$x\in K\longmapsto (x,f(x)+W_\omega(x)).$$
Set $s=\delta+1-\alpha$ which is greater than 1.  The $s$-energy of $m_\omega$ is equal to
\begin{eqnarray*}
I_s(m_\omega)&=&\int\!\!\int_{\Gamma_{f+W_\omega}^K\times\Gamma_{f+W_\omega}^K}\frac{dm_\omega(X)\,dm_\omega(Y)}{\Vert X-Y\Vert^s}\\
&=&\int\!\!\int_{K\times K}\frac{dm(x)\,dm(y)}{\Big(\Vert x-y\Vert^2+\big(f(x)+W_\omega(x)-(f(y)+W_\omega(y))\big)^2\Big)^{s/2}}.
\end{eqnarray*}
Fubini's theorem and Lemma \ref{LEMESP} ensure that
\begin{eqnarray*}
 \mathbb E\left[I_s(m_\omega)\right]&=&\int\!\!\int_{K\times K}\mathbb E\left[ \frac{1}{\Big(\Vert x-y\Vert^2+\big((f(x)-f(y))+(W(x)-W(y))\big)^2\Big)^{s/2}}\right]dm(x)\,dm(y)\\
&\le&C\int\!\!\int_{K\times K}\Vert x-y\Vert^{1-s-\alpha}dm(x)\,dm(y)\\
&=&CI_\delta(m)\\
&<&+\infty.
\end{eqnarray*}
We deduce that for $\mathbb P$-almost all $\omega\in \Omega$, the energy $I_s(m_\omega)$ is finite. Since $s$ can be chosen arbitrary closed to $\dim_{\mathcal H}(K)+1-\alpha$, we get 
$$\dim_{\mathcal H}\left(\Gamma_{f+W_\omega}^K\right)\ge \dim_{\mathcal H}(K)+1-\alpha\quad 
\mbox{almost surely}.$$

In the case where $\dim_{\mathcal H}(K)\le\alpha$, we proceed exactly in the same way, except that we take any $\delta<\dim_{\mathcal H}(K)$ and we set $s=\frac\delta\alpha$ which is smaller than 1. We
then get 
$$\dim_{\mathcal H}\left(\Gamma_{f+W}^K\right)\ge\frac{\dim_{\mathcal H}(K)}{\alpha}\quad\mbox{ almost surely}.$$

\section{Proof of Theorem \ref{THMMAIN}}\label{SECMAIN}
We can now prove Theorem \ref{THMMAIN}. Let $K$ be a compact set in $\rd$ satisfying $\dim_{\mathcal H}(K)>0$. Remark first that for any function $f\in\mathcal C(K)$,
 the graph $\Gamma_f^K$ is included in $K\times\RR$. It follows that
$$\dim_{\mathcal H}\left(\Gamma_f^K\right)\le \dim_{\mathcal H}(K\times \RR)= \dim_{\mathcal H}(K)+1.$$
Define
$$G=\left\{ f\in\mathcal C(K);\ \dim_{\mathcal H}\left(\Gamma_f^K\right)=\dim_{\mathcal H}(K)+1\right\}.$$
Theorem \ref{THMFBM} says that for any $\alpha$ such that $0<\alpha<\min(1,\dim_{\mathcal H}(K))$, the set $G_\alpha$ of all continuous functions $f\in\mathcal C(K)$ satisfying $\dim_{\mathcal H}\left(\Gamma_f^K\right)\ge\dim_{\mathcal H}(K)+1-\alpha$ is prevalent in $\mathcal C(K)$. Finally, we can write
$$G=\bigcap_{n\ge 0}G_{\alpha_n}$$
where $(\alpha_n)_{n\ge 0}$ is a sequence decreasing to 0 and we obtain that $G$ is prevalent in $\mathcal C(K)$.
\begin{remark}
It is an easy consequence of Ascoli's theorem that the law of the process $W$ is compactly supported in $\mathcal C(K)$ (remember that $W$ is almost surely $(\alpha-\varepsilon)$-H\"olderian). Then,
 we don't need to use that $\mathcal C(K)$ is a Polish space to obtain Theorem \ref{THMMAIN}.
\end{remark}
\begin{remark}
 Let $K=[0,1]$ and $f\in \mathcal C([0,1])$. Theorem \ref{THMMAIN} implies that the set $G\bigcap(f+G)$ is prevalent. We can then write
$$f=f_1-f_2\quad\text{with}\quad\dim_{\mathcal H}\left(\Gamma_{f_1}^{[0,1]}\right)=2\quad\text{and}\quad\dim_{\mathcal H}\left(\Gamma_{f_2}^{[0,1]}\right)=2$$
where $f_1$ and $f_2$ are continuous functions.

On the other hand, it was recalled in Theorem \ref{THMBAIRE} that the set
$$\tilde G=\left\{f\in\mathcal C([0,1])\ ;\ \dim_{\mathcal H}\left(\Gamma_f^{[0,1]}\right)=1\right\}$$
contains a dense $G_\delta$-set of $\mathcal C([0,1])$. It follows that any continuous function $f\in\mathcal C([0,1])$ can be written
$$f=f_1-f_2\quad\text{with}\quad\dim_{\mathcal H}\left(\Gamma_{f_1}^{[0,1]}\right)=1\quad\text{and}\quad\dim_{\mathcal H}\left(\Gamma_{f_2}^{[0,1]}\right)=1$$
where  $f_1$ and $f_2$ are continuous functions.

We can then ask the following question: given a real number $\beta\in(1,2)$ can we write an arbitrary  continuous function $f\in\mathcal C([0,1])$ in the following way
$$f=f_1-f_2\quad\text{with}\quad\dim_{\mathcal H}\left(\Gamma_{f_1}^{[0,1]}\right)=\beta\quad\text{and}\quad\dim_{\mathcal H}\left(\Gamma_{f_2}^{[0,1]}\right)=\beta$$
where  $f_1$ and $f_2$ are continuous functions?

We do not know the answer to this question.
\end{remark}
\section{The case of $\alpha$-H\"olderian functions}\label{SECHOLDER}
Let $0<\alpha<1$ and let $\mathcal C^\alpha(K)$ be the set of $\alpha$-H\"olderian functions in $K$ endowed with the standard norm
$$\Vert f\Vert_\alpha=\sup_{x\in K}|f(x)|+\sup_{(x,y)\in K^2}\frac{|f(x)-f(y)|}{\| x-y\|^\alpha}.$$
It is well known that the Hausdorff dimension of the graph $\Gamma_f^K$ of a function $f\in\mathcal C^\alpha(K)$ satisfies
\begin{eqnarray}\label{EQNCALPHA}
\dim_{\mathcal H}\left(\Gamma_f^K\right)\le\min\left(\frac{\dim_{\mathcal H}(K)}{\alpha}\,,\,\dim_{\mathcal H}(K)+1-\alpha\right)
\end{eqnarray}
(see for example the remark following the statement of Theorem \ref{THMFBM}). It is then natural to ask if inequality (\ref{EQNCALPHA}) is an equality in a prevalent set of $\mathcal C^\alpha(K)$. This is indeed the case as said in the following result.
\vfill\eject
\begin{theorem}\label{THMALPHA}
 Let $d\ge 1$, $0<\alpha<1$ and $K\subset \RR^d$ be a compact set with strictly positive Hausdorff dimension. The set
$$\left\{f\in\mathcal C^\alpha(k)\ ;\ \dim_{\mathcal H}\left(\Gamma_f^K\right)=\min\left(\frac{\dim_{\mathcal H}(K)}{\alpha}\,,\,\dim_{\mathcal H}(K)+1-\alpha\right)\right\}$$
is a prevalent subset of $\mathcal C^\alpha(K)$.
\end{theorem}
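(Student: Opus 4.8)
The plan is to mimic the proof of Theorem \ref{THMMAIN}, but working inside the smaller space $\mathcal C^\alpha(K)$ and using a suitably regularized random perturbation so that the perturbed function still lies in $\mathcal C^\alpha(K)$. The upper bound being already known (inequality (\ref{EQNCALPHA})), everything reduces to producing, for each $f\in\mathcal C^\alpha(K)$, a random perturbation $W$ taking values in $\mathcal C^\alpha(K)$ such that $\dim_{\mathcal H}(\Gamma_{f+W}^K)\ge\min(\dim_{\mathcal H}(K)/\alpha,\,\dim_{\mathcal H}(K)+1-\alpha)$ almost surely, and whose law is compactly supported (or simply to invoke the Polish-space argument from Section \ref{SECPRE}). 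The naive candidate $W(x)=W^1(x_1)+\cdots+W^d(x_d)$ with Hurst parameter $\alpha$ fails because Fractional Brownian Motion with Hurst index $\alpha$ is only $(\alpha-\varepsilon)$-H\"olderian, hence does not belong to $\mathcal C^\alpha(K)$.

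First I would fix this regularity defect. One clean way: for a parameter $\beta$ slightly bigger than $\alpha$, set $W_\beta(x)=W^1_\beta(x_1)+\cdots+W^d_\beta(x_d)$ where the $W^j_\beta$ are independent FBMs with Hurst parameter $\beta$; then $W_\beta$ is almost surely $(\beta-\varepsilon)$-H\"olderian, and choosing $\beta>\alpha$ (and $\varepsilon$ small) makes it genuinely $\alpha$-H\"olderian on the bounded set $K$, so $f+W_\beta\in\mathcal C^\alpha(K)$ for every $f\in\mathcal C^\alpha(K)$. By Ascoli's theorem the law of $W_\beta$ is compactly supported in $\mathcal C(K)$, and since the $\alpha$-H\"older seminorm is controlled by the $(\beta-\varepsilon)$-H\"older seminorm on $K$, a compactness argument (or the Polish-space reduction) shows the law is also Haar-admissible in $\mathcal C^\alpha(K)$. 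Then I would reread the proof of Theorem \ref{THMFBM}: Lemma \ref{LEMESP} goes through verbatim with $\alpha$ replaced by $\beta$, giving $\EE[I_s(m_\omega)]<\infty$ whenever $s<\min(\delta/\beta,\,\delta+1-\beta)$ for any $\delta<\dim_{\mathcal H}(K)$, hence
$$\dim_{\mathcal H}\left(\Gamma_{f+W_\beta}^K\right)\ge\min\left(\frac{\dim_{\mathcal H}(K)}{\beta}\,,\,\dim_{\mathcal H}(K)+1-\beta\right)\quad\text{almost surely}.$$
This proves that the set $G_\beta^\alpha=\{f\in\mathcal C^\alpha(K):\dim_{\mathcal H}(\Gamma_f^K)\ge\min(\dim_{\mathcal H}(K)/\beta,\dim_{\mathcal H}(K)+1-\beta)\}$ is prevalent in $\mathcal C^\alpha(K)$ for every $\beta\in(\alpha,1)$.

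Finally I would recover the target exponent by a countable intersection. Pick a sequence $\beta_n\downarrow\alpha$; the set $\bigcap_n G_{\beta_n}^\alpha$ is prevalent (countable intersections of prevalent sets are prevalent), and for $f$ in this intersection one has $\dim_{\mathcal H}(\Gamma_f^K)\ge\min(\dim_{\mathcal H}(K)/\beta_n,\dim_{\mathcal H}(K)+1-\beta_n)$ for all $n$, hence $\dim_{\mathcal H}(\Gamma_f^K)\ge\min(\dim_{\mathcal H}(K)/\alpha,\dim_{\mathcal H}(K)+1-\alpha)$ by letting $n\to\infty$. Combined with the upper bound (\ref{EQNCALPHA}) this gives equality on a prevalent subset of $\mathcal C^\alpha(K)$.

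The main obstacle I anticipate is not the energy computation (which is essentially Lemma \ref{LEMESP} again) but checking the prevalence machinery in $\mathcal C^\alpha(K)$: one must verify that $\mathcal C^\alpha(K)$ is a complete separable metric vector space (it is, being a separable Banach space under $\|\cdot\|_\alpha$ — or one uses the little-H\"older space $\mathcal C^\alpha_0(K)$ if separability of the full H\"older space is an issue) and that the law of $W_\beta$ is a genuine transverse (compactly supported) measure there, which requires the $\alpha$-H\"older norm bound on compact subsets coming from the $(\beta-\varepsilon)$-H\"older regularity, together with Ascoli's theorem applied in the $\mathcal C^\alpha$-topology. A minor subtlety is that one needs $\beta<1$ for the FBM construction, so the argument only uses $\beta\in(\alpha,1)$, which is exactly the range available; and one should make sure $\delta$ can simultaneously be taken close to $\dim_{\mathcal H}(K)$ and $\beta$ close to $\alpha$, which is fine since the two limits are independent.
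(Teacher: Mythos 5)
Your proposal is correct and follows essentially the same route as the paper: run the process of Theorem \ref{THMFBM} with a Hurst parameter $\alpha'>\alpha$ (your $\beta$), use the compact embedding of H\"older spaces to get a compactly supported law in $\mathcal C^\alpha(K)$ (which also sidesteps the non-separability of the full H\"older space), and conclude by intersecting over a sequence $\alpha_n\downarrow\alpha$ together with the upper bound (\ref{EQNCALPHA}). No substantive differences.
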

This result generalizes in an arbitrary compact subset of $\RR^d$ a previous work of Clausel and Nicolay (see \cite[Theorem 2]{CN}).
\begin{proof}
Let $\alpha<\alpha'<1$ and let $W$ be the stochastic process defined in Theorem \ref{THMFBM} with Hurst parameter $\alpha'$ instead of $\alpha$. The stochastic process $W_{|K}$ takes values in $\mathcal C^\alpha(K)$. Moreover, if $\alpha<\alpha''<\alpha'$, the injection
$$f\in\mathcal C^{\alpha''}(K)\longmapsto f\in\mathcal C^{\alpha}(K)$$
is compact. It follows that the law  of the stochastic process $W_{|K}$ is compactly supported in $\mathcal C^{\alpha}(K)$ ($W$ is $\alpha''$-H\"olderian). Then, Theorem \ref{THMFBM} ensures that the set 
$$\left\{f\in\mathcal C^\alpha(K)\ ;\ \dim_{\mathcal H}\left(\Gamma_f^K\right)\ge\min\left(\frac{\dim_{\mathcal H}(K)}{\alpha'}\,,\,\dim_{\mathcal H}(K)+1-\alpha'\right)\right\}$$
is prevalent in $\mathcal C^\alpha(K)$. Using a sequence $(\alpha_n)_{n\ge 0}$ decreasing to $\alpha$, we get the conclusion of Theorem \ref{THMALPHA}.
\end{proof}

\end{document}